\newtheorem{definition}{Definition}
\newtheorem{theorem}[definition]{Theorem}
\newtheorem{prp}[definition]{Proposition}
\newtheorem{cor}[definition]{Corollary}
\theoremstyle{remark}
\newcommand{\CR}{\operatorname{cr}}
\def\Sph{\mathbb S}
\def\dist{\operatorname{dist}}
\title[Butterfly theorems and hyperbolic geometry]{A porism for cyclic quadrilaterals, butterfly theorems, and hyperbolic geometry}
\author{Ivan Izmestiev}
\thanks{Supported by the European Research Council under the European Union's Seventh Framework Programme (FP7/2007-2013)/\allowbreak ERC Grant agreement no.~247029-SDModels}
\address{Institut f\"ur Mathematik \\
Freie Universit\"at Berlin \\
Arnimallee 2 \\
D-14195 Berlin \\
 GERMANY}
\email{izmestiev@math.fu-berlin.de}
\begin{document}

\maketitle

\begin{abstract}
If there exists a cyclic quadrilateral whose sides go through the given four collinear points, then there are infinitely many such quadrilaterals inscribed in the same circle. We give two proofs of this porism; one based on cross-ratios, the other on compositions of hyperbolic isometries.
\end{abstract}

\section{The porism}
In \cite{Kocik13}, the following theorem was proved.
\begin{theorem}
\label{thm:Porism}
Let $C$ be a circle in the plane, and let $p, q, r, s$ be four collinear points not on $C$. Choose a point $x \in C$ not collinear with $p, q, r, s$ and draw a chain of four chords starting at $x$ which go consecutively through $p, q, r, s$.

If the chain closes for some choice of $x$, then it closes for any other choice.
\end{theorem}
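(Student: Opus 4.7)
The plan is to follow the hyperbolic-geometric route advertised in the abstract. Identify the disk bounded by $C$ with the Klein model of the hyperbolic plane, so that $C$ is the circle at infinity. To each point $p$ of the projective plane not on $C$ I would assign an involutive isometry $T_p$ of $\mathbb{H}^2$: the half-turn about $p$ when $p$ is interior to $C$, and the reflection across the polar line of $p$ when $p$ is exterior. A short pole-polar computation shows that the boundary action of $T_p$ on $C$ is exactly the chord involution $\tau_p\colon x \mapsto y$, where $y$ is the second intersection of line $px$ with $C$. Thus the four-link chain based at $x$ closes iff $x$ is a fixed point of $F \colonequals T_s T_r T_q T_p$, and Theorem~\ref{thm:Porism} reduces to showing: any fixed point of $F$ on $C$ not lying on the common line $\ell$ of $p, q, r, s$ forces $F = \operatorname{id}$.

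The key structural claim is that, for every $p \in \ell$, the involution $T_p$ preserves the line $\ell$ and reverses its orientation. For interior $p$ this is immediate, since a half-turn reverses every line through its center. For exterior $p$, I would use pole-polar duality ($p \in \ell \Leftrightarrow \ell^* \in p^*$) together with the standard Klein-model characterization of hyperbolic perpendiculars to $\ell$ as precisely the chords through $\ell^*$; this gives $p^* \perp \ell$, so reflection in $p^*$ reverses $\ell$. It follows that $F$ lies in the subgroup of $\operatorname{Isom}(\mathbb{H}^2)$ preserving $\ell$ with its orientation, which consists of translations along $\ell$ together with glide reflections along $\ell$ (the latter arising when an odd number of the $p,q,r,s$ are exterior). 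In the degenerate configurations the same analysis substitutes $\ell$ by its pole: when $\ell$ is disjoint from $C$, the pole $\ell^*$ is interior and all $p^*$ concur at it, so $F$ is an elliptic rotation about $\ell^*$; when $\ell$ is tangent to $C$ at $\ell^* \in C$, the common concurrence at $\ell^*$ makes $F$ parabolic with fixed point $\ell^*$.

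The conclusion is then immediate from the classification of fixed points of hyperbolic isometries. A non-identity element of each of these families has its fixed points on $C$ contained in $\ell \cap C$: the two endpoints of $\ell$ for translations and glide reflections, no point at all for the elliptic rotation, and only $\ell^* \in \ell$ for the parabolic. Since the hypothesis provides a fixed point of $F$ in $C \setminus \ell$, we must have $F = \operatorname{id}$ and the chain closes for every starting position. The main obstacle I foresee is the uniform treatment of the three positions of $\ell$ with respect to $C$ and the orientation bookkeeping that locates $F$ within the appropriate component of the stabilizer of $\ell$; once these cases are laid out explicitly, the fixed-point classification finishes the proof in a single step.
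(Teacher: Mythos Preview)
Your proposal is correct and follows essentially the same route as the paper's second proof (Section~\ref{sec:HypMoeb}): interpret each chord involution as the boundary action of a hyperbolic point reflection or line reflection, observe that the composition $F$ lies in the stabilizer of $\ell$ (or of its pole $\ell^\circ$), and then invoke the fixed-point classification of M\"obius transformations to conclude that an extra fixed point off $\ell$ forces $F=\mathrm{id}$. Your treatment is slightly more explicit about orientation bookkeeping (noting the glide-reflection possibility in the secant case), whereas the paper in that case simply uses that the two points of $\ell\cap C$ are already fixed by $F$, so a third fixed point finishes; the tangent and disjoint cases are argued identically.
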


\begin{figure}[ht]
\begin{center}
\begin{picture}(0,0)%
\includegraphics{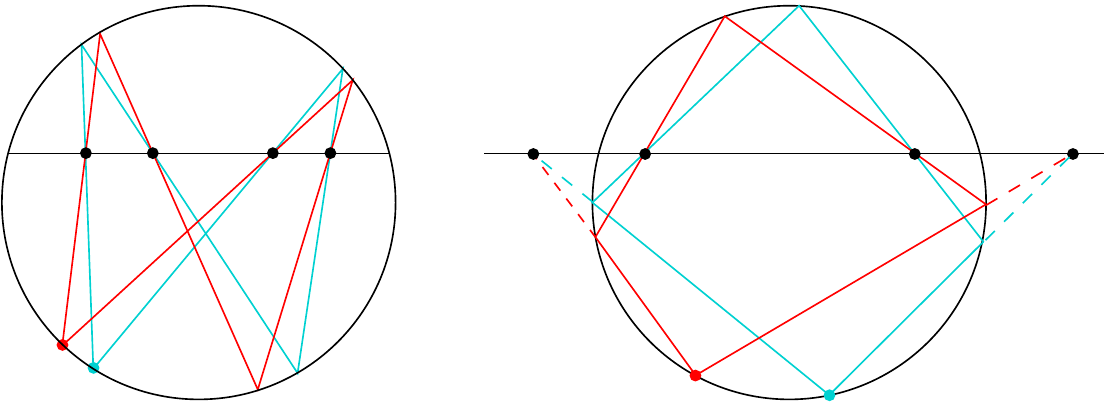}%
\end{picture}%
\setlength{\unitlength}{4144sp}%
\begingroup\makeatletter\ifx\SetFigFont\undefined%
\gdef\SetFigFont#1#2#3#4#5{%
  \reset@font\fontsize{#1}{#2pt}%
  \fontfamily{#3}\fontseries{#4}\fontshape{#5}%
  \selectfont}%
\fi\endgroup%
\begin{picture}(5060,1859)(443,-1458)
\put(568,-1243){\makebox(0,0)[lb]{\smash{{\SetFigFont{9}{10.8}{\rmdefault}{\mddefault}{\updefault}{\color[rgb]{0,0,0}$x$}%
}}}}
\put(1954,-391){\makebox(0,0)[lb]{\smash{{\SetFigFont{9}{10.8}{\rmdefault}{\mddefault}{\updefault}{\color[rgb]{0,0,0}$r$}%
}}}}
\put(1660,-394){\makebox(0,0)[lb]{\smash{{\SetFigFont{9}{10.8}{\rmdefault}{\mddefault}{\updefault}{\color[rgb]{0,0,0}$s$}%
}}}}
\put(709,-397){\makebox(0,0)[lb]{\smash{{\SetFigFont{9}{10.8}{\rmdefault}{\mddefault}{\updefault}{\color[rgb]{0,0,0}$p$}%
}}}}
\put(1027,-392){\makebox(0,0)[lb]{\smash{{\SetFigFont{9}{10.8}{\rmdefault}{\mddefault}{\updefault}{\color[rgb]{0,0,0}$q$}%
}}}}
\put(3485,-1403){\makebox(0,0)[lb]{\smash{{\SetFigFont{9}{10.8}{\rmdefault}{\mddefault}{\updefault}{\color[rgb]{0,0,0}$x$}%
}}}}
\put(2759,-397){\makebox(0,0)[lb]{\smash{{\SetFigFont{9}{10.8}{\rmdefault}{\mddefault}{\updefault}{\color[rgb]{0,0,0}$p$}%
}}}}
\put(3424,-392){\makebox(0,0)[lb]{\smash{{\SetFigFont{9}{10.8}{\rmdefault}{\mddefault}{\updefault}{\color[rgb]{0,0,0}$q$}%
}}}}
\put(4518,-391){\makebox(0,0)[lb]{\smash{{\SetFigFont{9}{10.8}{\rmdefault}{\mddefault}{\updefault}{\color[rgb]{0,0,0}$r$}%
}}}}
\put(5362,-394){\makebox(0,0)[lb]{\smash{{\SetFigFont{9}{10.8}{\rmdefault}{\mddefault}{\updefault}{\color[rgb]{0,0,0}$s$}%
}}}}
\end{picture}%

\end{center}
\caption{A porism for cyclic quadrilaterals.}
\label{fig:Porism}
\end{figure}

In Section \ref{sec:CR}, we give a short proof of Theorem \ref{thm:Porism} using cross-ratios and establish a link with the butterfly theorem and its projective generalization. Section \ref{sec:HypMoeb} interprets Theorem \ref{thm:Porism} in terms of hyperbolic and M\"obius geometry, reproves and generalizes it.

Both approaches to Theorem \ref{thm:Porism} are quite common and belong to the folklore in the mathematical olympiads community. We believe that they can serve as nice exercises in projective, or respectively hyperbolic, geometry.

The author thanks Arseniy Akopyan for useful remarks.

\section{Cross-ratios}
\label{sec:CR}
\subsection{The projective butterfly theorem}
Theorem \ref{thm:Porism} deals with incidencies of points and lines, thus belongs to projective geometry. Our first proof uses the basic invariant of projective transformations, the cross-ratio.

\begin{definition}
The \emph{cross-ratio} of four distinct real numbers $a$, $b$, $c$, $d$ is
\[
\CR(a,b;c,d) := \frac{a-c}{b-c} : \frac{a-d}{b-d}
\]
The cross-ratio of four collinear points is defined as the cross-ratio of their coordinates in some (and then any) affine coordinate system on the line.
\end{definition}

\begin{theorem}[The projective butterfly theorem]
\label{thm:CR}
Let $p$, $q$, $r$, $s$ be the intersection points of a line $\ell$ with the (extensions of the) consecutive sides of a cyclic quadrilateral.


If $\ell$ intersects the circumcircle $C$ in two different points $a$ and $b$, then
\begin{equation}
\label{eqn:CRCond}
\CR(a,b;p,q) = \CR(a,b;s,r).
\end{equation}

If $\ell$ is tangent to $C$ at a point $a$, then
\begin{equation}
\label{eqn:Tangent}
\frac{1}{a-p} - \frac{1}{a-q} = \frac{1}{a-s} - \frac{1}{a-r}
\end{equation}
where $a-p$ denotes the signed length of the segment $ap$.

If $\ell$ and $C$ are disjoint, then
\begin{equation}
\label{eqn:ImagCR}
\angle paq = \angle sar
\end{equation}
where $a$ is the point whose position is shown on Figure \ref{fig:CR}, right.

Conversely, if the circle $C$ and the points $p,q,r,s \in \ell$ satisfy the conditions \eqref{eqn:CRCond}, \eqref{eqn:Tangent}, or \eqref{eqn:ImagCR}, then for every $x \in C$ there is an inscribed quadrilateral $xyzt$ that intersects $\ell$ consecutively in the given points.
\end{theorem}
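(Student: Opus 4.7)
My plan is to prove all three identities, together with their converses, by a single projective argument: the invariance of cross-ratio under projection from a point of the conic $C$ (Steiner's theorem). Label the vertices of the inscribed quadrilateral $x,y,z,t$ so that the sides $xy, yz, zt, tx$ meet $\ell$ at $p, q, r, s$ respectively.

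For the secant case \eqref{eqn:CRCond}, I project $C$ onto $\ell$ from $x$: the points $a, b \in C \cap \ell$ are fixed, while $y \mapsto p$ and $t \mapsto s$ because the lines $xy$ and $xt$ meet $\ell$ at $p$ and $s$. By Steiner's theorem the cross-ratio of any four points on $C$, defined via projection from any fifth point on $C$, is intrinsic, so $\CR(a,b;p,s)$ equals the conic cross-ratio $\CR(a,b;y,t)$. Projecting analogously from the opposite vertex $z$ sends $y\mapsto q$, $t\mapsto r$ and fixes $a, b$, giving $\CR(a,b;q,r) = \CR(a,b;y,t)$. Hence $\CR(a,b;p,s) = \CR(a,b;q,r)$, and an elementary algebraic rearrangement shows this is the same relation as \eqref{eqn:CRCond}.

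The remaining two cases are degenerations of the secant identity. Writing $b = a + \varepsilon$ and Taylor-expanding one gets
\[
\CR(a, a+\varepsilon; p, q) = 1 + \varepsilon\!\left(\frac{1}{a-q} - \frac{1}{a-p}\right) + O(\varepsilon^2),
\]
so the first-order limit $b \to a$ of \eqref{eqn:CRCond} is exactly \eqref{eqn:Tangent}. For the disjoint case the same projection argument works in the complex projective plane with $b = \bar a$; a direct computation shows
\[
\CR(a,\bar a; p, q) = \frac{a-p}{\bar a-p}\cdot\frac{\bar a-q}{a-q} = e^{2i\,\angle paq}
\]
once the complex intersection $a \in \ell \cap C$ is identified with the real-plane point of Figure \ref{fig:CR}, and \eqref{eqn:ImagCR} falls out by equating arguments.

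For the converse, fix any $x \in C$ and construct the chain $y, z, t$ as the successive second intersections of $C$ with the lines $xp$, $yq$, $zr$; put $s' := \ell \cap tx$. The forward direction applied to the quadrilateral $xyzt$ produces the appropriate identity with $s$ replaced by $s'$, and comparing with the hypothesis forces $s = s'$, since in each of the three cases the fourth argument is determined by the other three. The main obstacle lies in the disjoint case: the complex cross-ratio argument is formally clean, but to extract \eqref{eqn:ImagCR} one must correctly match the real point $a$ of Figure \ref{fig:CR} with the complex intersection $\ell \cap C$ and fix a sign convention so that $\arg\bigl((a-p)/(\bar a - p)\bigr)$ records the intended geometric angle; the secant and tangent cases are either direct or follow by a routine limit.
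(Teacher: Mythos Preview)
Your argument is correct and is essentially the paper's own: both rest on the invariance of cross-ratio under projection from a point of the conic, both handle the disjoint case by complexification, and both deduce the converse from the fact that the fourth entry of a cross-ratio is determined by the other three. There are two minor variations. First, you project from the vertices $x$ and $z$, obtaining $\CR(a,b;p,s)=\CR(a,b;q,r)$, which you then rearrange into \eqref{eqn:CRCond}; the paper projects from $y$ and $t$ and lands on \eqref{eqn:CRCond} directly via $\CR(a,b;p,q)=\CR(a,b;x,z)=\CR(a,b;s,r)$. Second, for the tangent case the paper appeals to similar triangles and explicitly remarks that \eqref{eqn:Tangent} is ``not (at least not directly) a limit case'' of \eqref{eqn:CRCond}, since the zeroth-order limit is the triviality $1=1$; your first-order expansion in $\varepsilon=b-a$ is precisely the refinement that makes the limit argument go through, and is a clean alternative to the similar-triangles computation.
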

\begin{figure}[ht]
\begin{center}
\begin{picture}(0,0)%
\includegraphics{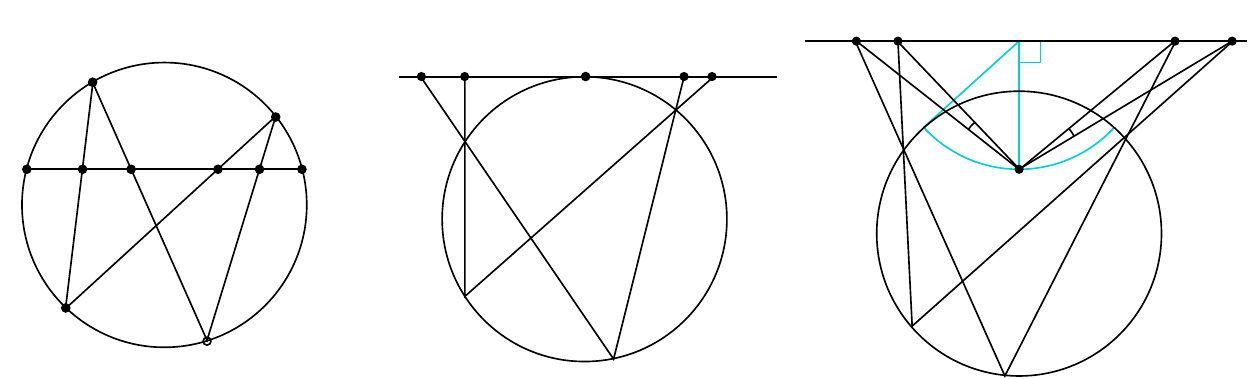}%
\end{picture}%
\setlength{\unitlength}{4144sp}%
\begingroup\makeatletter\ifx\SetFigFont\undefined%
\gdef\SetFigFont#1#2#3#4#5{%
  \reset@font\fontsize{#1}{#2pt}%
  \fontfamily{#3}\fontseries{#4}\fontshape{#5}%
  \selectfont}%
\fi\endgroup%
\begin{picture}(5714,1726)(326,-1019)
\put(850,-160){\makebox(0,0)[lb]{\smash{{\SetFigFont{9}{10.8}{\rmdefault}{\mddefault}{\updefault}{\color[rgb]{0,0,0}$q$}%
}}}}
\put(1610,190){\makebox(0,0)[lb]{\smash{{\SetFigFont{9}{10.8}{\rmdefault}{\mddefault}{\updefault}{\color[rgb]{0,0,0}$t$}%
}}}}
\put(589,-160){\makebox(0,0)[lb]{\smash{{\SetFigFont{9}{10.8}{\rmdefault}{\mddefault}{\updefault}{\color[rgb]{0,0,0}$p$}%
}}}}
\put(341,-97){\makebox(0,0)[lb]{\smash{{\SetFigFont{9}{10.8}{\rmdefault}{\mddefault}{\updefault}{\color[rgb]{0,0,0}$a$}%
}}}}
\put(1751,-95){\makebox(0,0)[lb]{\smash{{\SetFigFont{9}{10.8}{\rmdefault}{\mddefault}{\updefault}{\color[rgb]{0,0,0}$b$}%
}}}}
\put(508,-754){\makebox(0,0)[lb]{\smash{{\SetFigFont{9}{10.8}{\rmdefault}{\mddefault}{\updefault}{\color[rgb]{0,0,0}$x$}%
}}}}
\put(1294,-926){\makebox(0,0)[lb]{\smash{{\SetFigFont{9}{10.8}{\rmdefault}{\mddefault}{\updefault}{\color[rgb]{0,0,0}$z$}%
}}}}
\put(648,364){\makebox(0,0)[lb]{\smash{{\SetFigFont{9}{10.8}{\rmdefault}{\mddefault}{\updefault}{\color[rgb]{0,0,0}$y$}%
}}}}
\put(1308,-166){\makebox(0,0)[lb]{\smash{{\SetFigFont{9}{10.8}{\rmdefault}{\mddefault}{\updefault}{\color[rgb]{0,0,0}$s$}%
}}}}
\put(1507,-168){\makebox(0,0)[lb]{\smash{{\SetFigFont{9}{10.8}{\rmdefault}{\mddefault}{\updefault}{\color[rgb]{0,0,0}$r$}%
}}}}
\put(2137,412){\makebox(0,0)[lb]{\smash{{\SetFigFont{9}{10.8}{\rmdefault}{\mddefault}{\updefault}{\color[rgb]{0,0,0}$q$}%
}}}}
\put(2407,406){\makebox(0,0)[lb]{\smash{{\SetFigFont{9}{10.8}{\rmdefault}{\mddefault}{\updefault}{\color[rgb]{0,0,0}$p$}%
}}}}
\put(2924,402){\makebox(0,0)[lb]{\smash{{\SetFigFont{9}{10.8}{\rmdefault}{\mddefault}{\updefault}{\color[rgb]{0,0,0}$a$}%
}}}}
\put(3359,400){\makebox(0,0)[lb]{\smash{{\SetFigFont{9}{10.8}{\rmdefault}{\mddefault}{\updefault}{\color[rgb]{0,0,0}$r$}%
}}}}
\put(3570,399){\makebox(0,0)[lb]{\smash{{\SetFigFont{9}{10.8}{\rmdefault}{\mddefault}{\updefault}{\color[rgb]{0,0,0}$s$}%
}}}}
\put(4148,572){\makebox(0,0)[lb]{\smash{{\SetFigFont{9}{10.8}{\rmdefault}{\mddefault}{\updefault}{\color[rgb]{0,0,0}$q$}%
}}}}
\put(4387,568){\makebox(0,0)[lb]{\smash{{\SetFigFont{9}{10.8}{\rmdefault}{\mddefault}{\updefault}{\color[rgb]{0,0,0}$p$}%
}}}}
\put(5603,558){\makebox(0,0)[lb]{\smash{{\SetFigFont{9}{10.8}{\rmdefault}{\mddefault}{\updefault}{\color[rgb]{0,0,0}$r$}%
}}}}
\put(5948,559){\makebox(0,0)[lb]{\smash{{\SetFigFont{9}{10.8}{\rmdefault}{\mddefault}{\updefault}{\color[rgb]{0,0,0}$s$}%
}}}}
\put(4896,-172){\makebox(0,0)[lb]{\smash{{\SetFigFont{9}{10.8}{\rmdefault}{\mddefault}{\updefault}{\color[rgb]{0,0,0}$a$}%
}}}}
\end{picture}%
\end{center}
\caption{Three cases of the projective butterfly theorem.}
\label{fig:CR}
\end{figure}

Theorem \ref{thm:CR} will be proved in the next section.

\begin{proof}[First proof of Theorem \ref{thm:Porism}]
If for some $x \in C$ the chain of chords closes, then by the first half of Theorem \ref{thm:CR} we have \eqref{eqn:CRCond}, respectively \eqref{eqn:Tangent}, or \eqref{eqn:ImagCR}. By the second half of the same theorem, the chain then closes for any $x \in C$.
\end{proof}

\subsection{Proving the projective butterfly theorem}
We need to extend the notion of the cross-ratio to concurrent lines and to points on a circle. This is classical and can be found for example in \cite[Chapter 16]{BerII}, \cite[Chapter VI]{Aud03}, or \cite[Appendix 4]{PT01}.

\begin{definition}
The \emph{cross-ratio} of four lines $\ell_1$, $\ell_2$, $\ell_3$, $\ell_4$ intersecting at a point $x$ is
\[
\CR(\ell_1, \ell_2; \ell_3, \ell_4) := \frac{\sin(\alpha-\gamma)}{\sin(\beta-\gamma)} : \frac{\sin(\alpha-\delta)}{\sin(\beta-\delta)}
\]
where $\alpha$, $\beta$, $\gamma$, $\delta$ are the angles from some line $\ell_0$ to $\ell_1$, $\ell_2$, $\ell_3$, $\ell_4$.
\end{definition}
Replacing $\alpha$ by $\pi+\alpha$ (taking the other oriented angle from $\ell_0$ to $\ell_1$) doesn't change the value of the cross-ratio.

\begin{prp}
\label{prp:CRLinesPoints}
Let $\ell$ be a line that doesn't go through $x$ and intersects the lines $\ell_1$, $\ell_2$, $\ell_3$, $\ell_4$ in the points $a$, $b$, $c$, $d$. Then we have
\[
\CR(\ell_1, \ell_2; \ell_3, \ell_4) = \CR(a, b; c, d).
\]
\end{prp}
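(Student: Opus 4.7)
The plan is to express both cross-ratios in a common form using a convenient reference direction, and then observe that the two expressions coincide.

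First I would drop the perpendicular from $x$ to $\ell$, calling its foot $o$ and its length $h$. Taking the line $xo$ as the reference line $\ell_0$ in the definition of the cross-ratio of concurrent lines, I let $\theta_i$ denote the oriented angle from $xo$ to $\ell_i$, so that $\alpha = \theta_1$, $\beta = \theta_2$, $\gamma = \theta_3$, $\delta = \theta_4$. With this choice, the signed coordinate of $a_i$ on $\ell$ measured from $o$ is exactly $h \tan \theta_i$, since $a_i$, $o$, $x$ form a right triangle with the right angle at $o$.

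Next I would use the elementary identity
\[
\tan\theta_i - \tan\theta_j = \frac{\sin(\theta_i - \theta_j)}{\cos\theta_i \cos\theta_j}
\]
to rewrite every signed difference of coordinates as
\[
a_i - a_j = h\,\frac{\sin(\theta_i - \theta_j)}{\cos\theta_i\cos\theta_j}.
\]
Substituting this into
\[
\CR(a,b;c,d) = \frac{(a-c)(b-d)}{(b-c)(a-d)},
\]
the factor $h$ appears with total exponent zero, and each $\cos\theta_i$ appears once in the numerator and once in the denominator, so every $\cos$ factor cancels. What remains is exactly
\[
\frac{\sin(\theta_1 - \theta_3)\sin(\theta_2 - \theta_4)}{\sin(\theta_2 - \theta_3)\sin(\theta_1 - \theta_4)} \;=\; \CR(\ell_1,\ell_2;\ell_3,\ell_4),
\]
which is the desired identity for this special choice of reference line $\ell_0$.

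Finally I would observe that the right-hand side of the definition of the cross-ratio of lines depends only on the four differences $\theta_i - \theta_j$, so replacing $\ell_0 = xo$ by any other reference line shifts all four angles by the same constant and leaves the expression unchanged; hence the equality holds for any choice of $\ell_0$. The only delicate point, and the main place to be careful, is checking the signs: the formula $a_i - o = h\tan\theta_i$ must be interpreted with $\theta_i$ as an oriented angle and the coordinate on $\ell$ as a signed length, and one must recall that replacing $\theta_i$ by $\theta_i + \pi$ (the remark following the definition) flips the sign of both $\sin(\theta_i - \theta_j)$ and $\cos\theta_i$, so the formula above is unaffected. Once signs are handled, the calculation goes through unchanged.
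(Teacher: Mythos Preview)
Your argument is correct. Dropping the perpendicular from $x$ to $\ell$, writing each intersection point as $h\tan\theta_i$, and using the tangent subtraction formula to cancel the cosines is a clean and valid computation, and your remarks about signs and the independence of the reference line $\ell_0$ are handled properly.

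The paper's own proof reaches the same identity by a different elementary device: it applies the law of sines in the triangles $xac$, $xbc$, $xad$, $xbd$. Since $a,b,c,d$ are collinear, the sines of the base angles at $a$ (respectively at $b$) are the same in both triangles sharing that vertex, so in the ratio $\frac{a-c}{b-c}:\frac{a-d}{b-d}$ the factors $|xc|$, $|xd|$ and the base-angle sines cancel, leaving exactly $\frac{\sin\angle axc}{\sin\angle bxc}:\frac{\sin\angle axd}{\sin\angle bxd}$. Compared with your approach, this avoids introducing the auxiliary foot $o$ and the tangent identity, giving a one-line proof; on the other hand, your coordinate set-up makes the cancellation completely mechanical and transparent, which some readers may prefer. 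Both routes are standard and essentially equivalent in difficulty.
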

\begin{proof}
The sine law implies
\[
\frac{a-c}{b-c} : \frac{a-d}{b-d} = \frac{\sin\angle axc}{\sin\angle bxc} : \frac{\sin\angle axd}{\sin\angle bxd}
\]
with signed lengths and angle values.
\end{proof}

Along the way we have proved that central projections (and hence all projective transformations) preserve the cross-ratio of collinear points, see Figure \ref{fig:CRPreserve}. This implies in turn that projective transformations preserve the cross-ratio of concurrent lines.

\begin{figure}[ht]
\begin{center}
\begin{picture}(0,0)%
\includegraphics{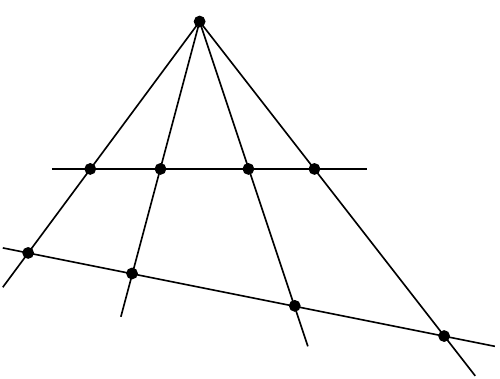}%
\end{picture}%
\setlength{\unitlength}{4144sp}%
\begingroup\makeatletter\ifx\SetFigFont\undefined%
\gdef\SetFigFont#1#2#3#4#5{%
  \reset@font\fontsize{#1}{#2pt}%
  \fontfamily{#3}\fontseries{#4}\fontshape{#5}%
  \selectfont}%
\fi\endgroup%
\begin{picture}(2274,1730)(-11,-793)
\put(1430,202){\makebox(0,0)[lb]{\smash{{\SetFigFont{9}{10.8}{\rmdefault}{\mddefault}{\updefault}{\color[rgb]{0,0,0}$b$}%
}}}}
\put( 82,-392){\makebox(0,0)[lb]{\smash{{\SetFigFont{9}{10.8}{\rmdefault}{\mddefault}{\updefault}{\color[rgb]{0,0,0}$a'$}%
}}}}
\put(586,-505){\makebox(0,0)[lb]{\smash{{\SetFigFont{9}{10.8}{\rmdefault}{\mddefault}{\updefault}{\color[rgb]{0,0,0}$c'$}%
}}}}
\put(1888,-724){\makebox(0,0)[lb]{\smash{{\SetFigFont{9}{10.8}{\rmdefault}{\mddefault}{\updefault}{\color[rgb]{0,0,0}$b'$}%
}}}}
\put(754,814){\makebox(0,0)[lb]{\smash{{\SetFigFont{9}{10.8}{\rmdefault}{\mddefault}{\updefault}{\color[rgb]{0,0,0}$x$}%
}}}}
\put(294,200){\makebox(0,0)[lb]{\smash{{\SetFigFont{9}{10.8}{\rmdefault}{\mddefault}{\updefault}{\color[rgb]{0,0,0}$a$}%
}}}}
\put(605,192){\makebox(0,0)[lb]{\smash{{\SetFigFont{9}{10.8}{\rmdefault}{\mddefault}{\updefault}{\color[rgb]{0,0,0}$c$}%
}}}}
\put(1139,192){\makebox(0,0)[lb]{\smash{{\SetFigFont{9}{10.8}{\rmdefault}{\mddefault}{\updefault}{\color[rgb]{0,0,0}$d$}%
}}}}
\put(1150,-609){\makebox(0,0)[lb]{\smash{{\SetFigFont{9}{10.8}{\rmdefault}{\mddefault}{\updefault}{\color[rgb]{0,0,0}$d'$}%
}}}}
\end{picture}%
\end{center}
\caption{$\frac{|ac|}{|bc|} : \frac{|ad|}{|bd|} = \frac{\sin\angle axc}{\sin\angle bxc} : \frac{\sin\angle axd}{\sin\angle bxd} = \frac{|a'c'|}{|b'c'|} : \frac{|a'd'|}{|b'd'|}$.}
\label{fig:CRPreserve}
\end{figure}

\begin{definition}
The cross-ratio of four points on a circle $C$ is defined as
\[
\CR(a, b; c, d) := \CR(xa, xb; xc, xd)
\]
where $x$ is an arbitrary point on $C$.
\end{definition}

\begin{figure}[ht]
\begin{center}
\begin{picture}(0,0)%
\includegraphics{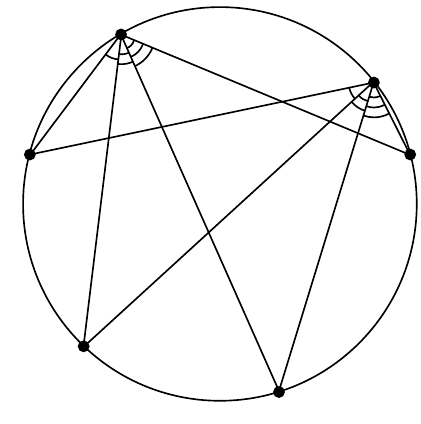}%
\end{picture}%
\setlength{\unitlength}{4144sp}%
\begingroup\makeatletter\ifx\SetFigFont\undefined%
\gdef\SetFigFont#1#2#3#4#5{%
  \reset@font\fontsize{#1}{#2pt}%
  \fontfamily{#3}\fontseries{#4}\fontshape{#5}%
  \selectfont}%
\fi\endgroup%
\begin{picture}(1920,1931)(346,-1515)
\put(2251,-331){\makebox(0,0)[lb]{\smash{{\SetFigFont{9}{10.8}{\rmdefault}{\mddefault}{\updefault}{\color[rgb]{0,0,0}$b$}%
}}}}
\put(361,-331){\makebox(0,0)[lb]{\smash{{\SetFigFont{9}{10.8}{\rmdefault}{\mddefault}{\updefault}{\color[rgb]{0,0,0}$a$}%
}}}}
\put(586,-1231){\makebox(0,0)[lb]{\smash{{\SetFigFont{9}{10.8}{\rmdefault}{\mddefault}{\updefault}{\color[rgb]{0,0,0}$c$}%
}}}}
\put(1642,-1460){\makebox(0,0)[lb]{\smash{{\SetFigFont{9}{10.8}{\rmdefault}{\mddefault}{\updefault}{\color[rgb]{0,0,0}$d$}%
}}}}
\put(777,293){\makebox(0,0)[lb]{\smash{{\SetFigFont{9}{10.8}{\rmdefault}{\mddefault}{\updefault}{\color[rgb]{0,0,0}$x$}%
}}}}
\put(2087, 62){\makebox(0,0)[lb]{\smash{{\SetFigFont{9}{10.8}{\rmdefault}{\mddefault}{\updefault}{\color[rgb]{0,0,0}$x'$}%
}}}}
\end{picture}%
\end{center}
\caption{Cross-ratio of points on a circle is well-defined.}
\label{fig:Angles}
\end{figure}

The right-hand side is independent of $x$ because of Figure \ref{fig:Angles}.
Now everything is ready to prove Theorem \ref{thm:CR}.

\begin{proof}[Proof of Theorem \ref{thm:CR}]
In the case of $\ell$ intersecting $C$, apply Proposition \ref{prp:CRLinesPoints} and the definition of the cross-ratio on the circle twice:
\begin{multline*}
\CR(a, b; p, q) = \CR(ya, yb; yp, yq) = \CR(a, b; x, z)\\
= \CR(ta, tb; tx, tz) = \CR(a, b; s, r).
\end{multline*}
For the inverse implication, use $\CR(a, b; s, r) = \CR(a, b; s, r') \Rightarrow r = r'$.

Incidencies of points, lines, and circles can be expressed by algebraic equations. Therefore, if \eqref{eqn:CRCond} holds in the case when the intersection points $a$ and $b$ are real, it holds in the case of complex intersection points, too. Equation \eqref{eqn:ImagCR} is a reformulation of \eqref{eqn:CRCond} in the case of complex $a$ and $b$.

We invite the reader to prove \eqref{eqn:Tangent} using similar triangles.
Equation \eqref{eqn:Tangent} is not (at least not directly) a limit case of \eqref{eqn:CRCond}, as $\CR(a, a; p, q) = 1$.
\end{proof}

Due to the projective invariance of the cross-ratio, \eqref{eqn:CRCond} holds if $C$ is an arbitrary ellipse, parabola, or hyperbola.

\subsection{More on butterflies}
The projective butterfly theorem immediately implies the classical butterfly theorem as well as its generalization due to Klamkin \cite{Kla65}, see Fig. \ref{fig:Butterfly}. This is the proof number ten (out of twenty) on~\cite{BogKnot}. A similar proof is given in \cite[Chapter~30]{Pra91}.

\begin{figure}[ht]
\begin{center}
\begin{picture}(0,0)%
\includegraphics{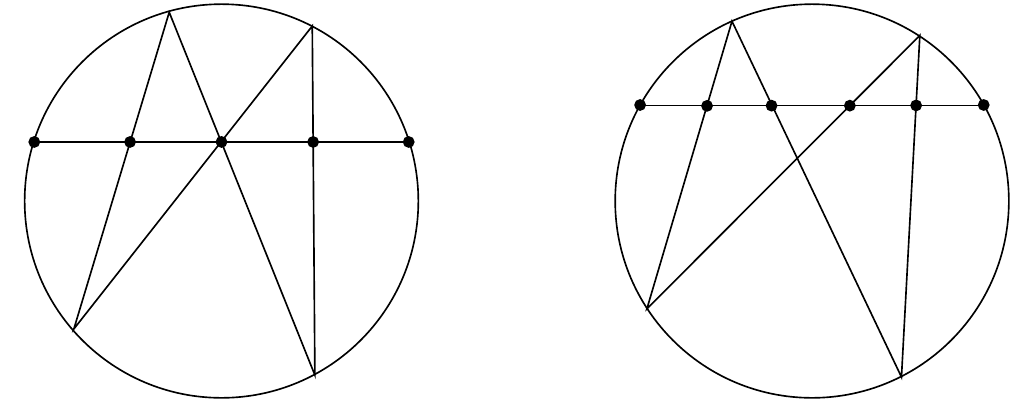}%
\end{picture}%
\setlength{\unitlength}{4144sp}%
\begingroup\makeatletter\ifx\SetFigFont\undefined%
\gdef\SetFigFont#1#2#3#4#5{%
  \reset@font\fontsize{#1}{#2pt}%
  \fontfamily{#3}\fontseries{#4}\fontshape{#5}%
  \selectfont}%
\fi\endgroup%
\begin{picture}(4620,1814)(-111,-968)
\put(-96,154){\makebox(0,0)[lb]{\smash{{\SetFigFont{9}{10.8}{\rmdefault}{\mddefault}{\updefault}{\color[rgb]{0,0,0}$a$}%
}}}}
\put(1801,141){\makebox(0,0)[lb]{\smash{{\SetFigFont{9}{10.8}{\rmdefault}{\mddefault}{\updefault}{\color[rgb]{0,0,0}$b$}%
}}}}
\put(484, 64){\makebox(0,0)[lb]{\smash{{\SetFigFont{9}{10.8}{\rmdefault}{\mddefault}{\updefault}{\color[rgb]{0,0,0}$p$}%
}}}}
\put(827, 22){\makebox(0,0)[lb]{\smash{{\SetFigFont{9}{10.8}{\rmdefault}{\mddefault}{\updefault}{\color[rgb]{0,0,0}$q$}%
}}}}
\put(1345, 85){\makebox(0,0)[lb]{\smash{{\SetFigFont{9}{10.8}{\rmdefault}{\mddefault}{\updefault}{\color[rgb]{0,0,0}$r$}%
}}}}
\put(2653,318){\makebox(0,0)[lb]{\smash{{\SetFigFont{9}{10.8}{\rmdefault}{\mddefault}{\updefault}{\color[rgb]{0,0,0}$a$}%
}}}}
\put(4428,321){\makebox(0,0)[lb]{\smash{{\SetFigFont{9}{10.8}{\rmdefault}{\mddefault}{\updefault}{\color[rgb]{0,0,0}$b$}%
}}}}
\put(3124,264){\makebox(0,0)[lb]{\smash{{\SetFigFont{9}{10.8}{\rmdefault}{\mddefault}{\updefault}{\color[rgb]{0,0,0}$p$}%
}}}}
\put(4085,250){\makebox(0,0)[lb]{\smash{{\SetFigFont{9}{10.8}{\rmdefault}{\mddefault}{\updefault}{\color[rgb]{0,0,0}$r$}%
}}}}
\put(3754,253){\makebox(0,0)[lb]{\smash{{\SetFigFont{9}{10.8}{\rmdefault}{\mddefault}{\updefault}{\color[rgb]{0,0,0}$s$}%
}}}}
\put(3316,259){\makebox(0,0)[lb]{\smash{{\SetFigFont{9}{10.8}{\rmdefault}{\mddefault}{\updefault}{\color[rgb]{0,0,0}$q$}%
}}}}
\end{picture}%
\end{center}
\caption{Left: $|aq|=|bq| \Rightarrow |pq|=|rq|$. Right: $|aq|=|bs| \Rightarrow |pq|=|rs|$.}
\label{fig:Butterfly}
\end{figure}

The case \eqref{eqn:CRCond} of the projective butterfly theorem appears in \cite{Brady}. Coxeter gives a special case as Exercise 2 in \cite[Chapter 8.3]{CoxProj}.

\section{Hyperbolic isometries and M\"obius transformations}
\label{sec:HypMoeb}
\subsection{The Cayley-Klein model of the hyperbolic plane}
In the \emph{Cayley-Klein model}, the points of the hyperbolic plane are the points inside a circle $C$, and the lines are the open chords of $C$. The distance between two points $p$ and $q$ is defined by the formula
\[
\dist(p,q) := \frac12 |\log\CR(a, b; p, q)|
\]
where $a$ and $b$ are the endpoints of the chord through $p$ and $q$.

Isometries of the hyperbolic plane in the Cayley-Klein model are those projective transformations that map the interior of $C$ to itself. There are plenty of them; for example, two triangles with equal corresponding sides are congruent, that is, they can be mapped one to the other by a hyperbolic isometry.

Theorem \ref{thm:CR} implies the following.

\begin{cor}
\label{cor:HypEqual}
The segments $pq$ and $rs$ on Figure \ref{fig:EqualLength}, left, have equal length in the Cayley-Klein model of the hyperbolic plane.
\end{cor}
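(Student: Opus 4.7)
The plan is to reduce the statement directly to equation \eqref{eqn:CRCond} of the projective butterfly theorem. Since the points $p,q,r,s$ are assumed to lie in the Cayley-Klein disk (as segments of the hyperbolic plane), the line $\ell$ through them must cross $C$ in two real points $a,b$, so we are squarely in the first case of Theorem \ref{thm:CR}. The hyperbolic distances we wish to compare are then
\[
\dist(p,q) = \tfrac12\bigl|\log\CR(a,b;p,q)\bigr|, \qquad \dist(r,s) = \tfrac12\bigl|\log\CR(a,b;r,s)\bigr|.
\]

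First I would invoke \eqref{eqn:CRCond}, which gives $\CR(a,b;p,q) = \CR(a,b;s,r)$. Next I would use the elementary identity $\CR(a,b;s,r) = 1/\CR(a,b;r,s)$, obtained by swapping the last two arguments in the definition of the cross-ratio. Combining these two equalities yields
\[
\CR(a,b;p,q) \cdot \CR(a,b;r,s) = 1,
\]
so $\log\CR(a,b;p,q) = -\log\CR(a,b;r,s)$, and the two absolute values coincide. Dividing by $2$ gives $\dist(p,q) = \dist(r,s)$, which is exactly the claim.

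There is really no obstacle here beyond bookkeeping the argument order in the cross-ratio; the content is entirely contained in Theorem \ref{thm:CR}. If anything is worth a brief remark, it is that the same proof shows that the butterfly theorem is precisely the statement that $p\mapsto q$ and $s\mapsto r$ differ by a hyperbolic isometry of the line $\ell$ (an orientation-reversing one, which is why the order of $r,s$ gets reversed), foreshadowing the hyperbolic-isometry viewpoint developed in the remainder of Section~\ref{sec:HypMoeb}.
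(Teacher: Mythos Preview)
Your argument is correct and is exactly what the paper intends: the corollary is stated immediately after ``Theorem~\ref{thm:CR} implies the following,'' with no further proof, and you have simply spelled out the one-line deduction from~\eqref{eqn:CRCond} via the distance formula $\dist(p,q)=\tfrac12|\log\CR(a,b;p,q)|$ and the reciprocal identity for swapping the last two arguments.
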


\begin{figure}[ht]
\begin{center}
\begin{picture}(0,0)%
\includegraphics{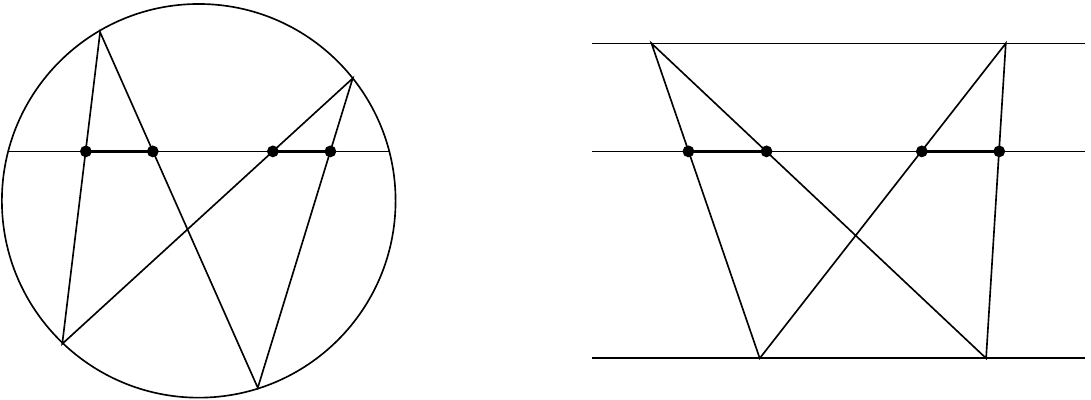}%
\end{picture}%
\setlength{\unitlength}{4144sp}%
\begingroup\makeatletter\ifx\SetFigFont\undefined%
\gdef\SetFigFont#1#2#3#4#5{%
  \reset@font\fontsize{#1}{#2pt}%
  \fontfamily{#3}\fontseries{#4}\fontshape{#5}%
  \selectfont}%
\fi\endgroup%
\begin{picture}(4970,1814)(443,-1418)
\put(676,-421){\makebox(0,0)[lb]{\smash{{\SetFigFont{9}{10.8}{\rmdefault}{\mddefault}{\updefault}{\color[rgb]{0,0,0}$p$}%
}}}}
\put(1036,-421){\makebox(0,0)[lb]{\smash{{\SetFigFont{9}{10.8}{\rmdefault}{\mddefault}{\updefault}{\color[rgb]{0,0,0}$q$}%
}}}}
\put(1666,-421){\makebox(0,0)[lb]{\smash{{\SetFigFont{9}{10.8}{\rmdefault}{\mddefault}{\updefault}{\color[rgb]{0,0,0}$s$}%
}}}}
\put(1936,-421){\makebox(0,0)[lb]{\smash{{\SetFigFont{9}{10.8}{\rmdefault}{\mddefault}{\updefault}{\color[rgb]{0,0,0}$r$}%
}}}}
\put(3478,-421){\makebox(0,0)[lb]{\smash{{\SetFigFont{9}{10.8}{\rmdefault}{\mddefault}{\updefault}{\color[rgb]{0,0,0}$p$}%
}}}}
\put(3892,-421){\makebox(0,0)[lb]{\smash{{\SetFigFont{9}{10.8}{\rmdefault}{\mddefault}{\updefault}{\color[rgb]{0,0,0}$q$}%
}}}}
\put(4609,-421){\makebox(0,0)[lb]{\smash{{\SetFigFont{9}{10.8}{\rmdefault}{\mddefault}{\updefault}{\color[rgb]{0,0,0}$s$}%
}}}}
\put(5028,-421){\makebox(0,0)[lb]{\smash{{\SetFigFont{9}{10.8}{\rmdefault}{\mddefault}{\updefault}{\color[rgb]{0,0,0}$r$}%
}}}}
\end{picture}%
\end{center}
\caption{Addition of segments in hyperbolic and euclidean geometry.}
\label{fig:EqualLength}
\end{figure}

This gives a ruler construction for collinear segments of equal length with arbitrary initial points. 
Compare this with the construction of equal segments in the euclidean plane, given two lines parallel to $\ell$, in Figure \ref{fig:EqualLength}, right.

Every point $p$ outside $C$ corresponds by polarity to a hyperbolic line $p^\circ$. If one or both of $p$ and $q$ lie outside $C$, then the cross-ratio $\CR(a, b; p, q)$ is related to the distance between a point and a line, the angle between two intersecting lines, or the distance between two disjoint lines, see Figure \ref{fig:PointsLines}.

\begin{figure}[ht]
\begin{center}
\begin{picture}(0,0)%
\includegraphics{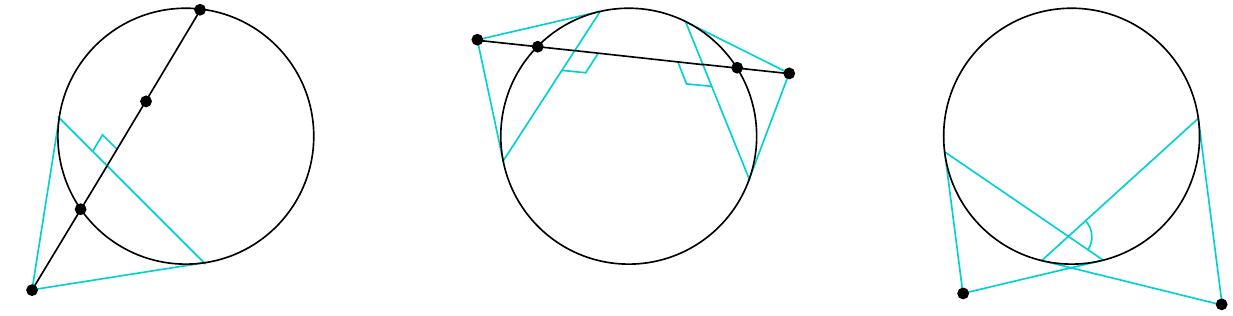}%
\end{picture}%
\setlength{\unitlength}{4144sp}%
\begingroup\makeatletter\ifx\SetFigFont\undefined%
\gdef\SetFigFont#1#2#3#4#5{%
  \reset@font\fontsize{#1}{#2pt}%
  \fontfamily{#3}\fontseries{#4}\fontshape{#5}%
  \selectfont}%
\fi\endgroup%
\begin{picture}(5624,1430)(-173,-658)
\put(-158,-520){\makebox(0,0)[lb]{\smash{{\SetFigFont{9}{10.8}{\rmdefault}{\mddefault}{\updefault}{\color[rgb]{0,0,0}$p$}%
}}}}
\put(519,211){\makebox(0,0)[lb]{\smash{{\SetFigFont{9}{10.8}{\rmdefault}{\mddefault}{\updefault}{\color[rgb]{0,0,0}$q$}%
}}}}
\put(715,595){\makebox(0,0)[lb]{\smash{{\SetFigFont{9}{10.8}{\rmdefault}{\mddefault}{\updefault}{\color[rgb]{0,0,0}$b$}%
}}}}
\put(238,-212){\makebox(0,0)[lb]{\smash{{\SetFigFont{9}{10.8}{\rmdefault}{\mddefault}{\updefault}{\color[rgb]{0,0,0}$a$}%
}}}}
\put(1872,630){\makebox(0,0)[lb]{\smash{{\SetFigFont{9}{10.8}{\rmdefault}{\mddefault}{\updefault}{\color[rgb]{0,0,0}$p$}%
}}}}
\put(2265,442){\makebox(0,0)[lb]{\smash{{\SetFigFont{9}{10.8}{\rmdefault}{\mddefault}{\updefault}{\color[rgb]{0,0,0}$a$}%
}}}}
\put(3443,486){\makebox(0,0)[lb]{\smash{{\SetFigFont{9}{10.8}{\rmdefault}{\mddefault}{\updefault}{\color[rgb]{0,0,0}$q$}%
}}}}
\put(4097,-541){\makebox(0,0)[lb]{\smash{{\SetFigFont{9}{10.8}{\rmdefault}{\mddefault}{\updefault}{\color[rgb]{0,0,0}$p$}%
}}}}
\put(3109,334){\makebox(0,0)[lb]{\smash{{\SetFigFont{9}{10.8}{\rmdefault}{\mddefault}{\updefault}{\color[rgb]{0,0,0}$b$}%
}}}}
\put(5436,-598){\makebox(0,0)[lb]{\smash{{\SetFigFont{9}{10.8}{\rmdefault}{\mddefault}{\updefault}{\color[rgb]{0,0,0}$q$}%
}}}}
\put(2982,131){\makebox(0,0)[lb]{\smash{{\SetFigFont{9}{10.8}{\rmdefault}{\mddefault}{\updefault}{\color[rgb]{0,.82,.82}$q^\circ$}%
}}}}
\put(2231,149){\makebox(0,0)[lb]{\smash{{\SetFigFont{9}{10.8}{\rmdefault}{\mddefault}{\updefault}{\color[rgb]{0,.82,.82}$p^\circ$}%
}}}}
\put(4938, 60){\makebox(0,0)[lb]{\smash{{\SetFigFont{9}{10.8}{\rmdefault}{\mddefault}{\updefault}{\color[rgb]{0,.82,.82}$q^\circ$}%
}}}}
\put(4311, -8){\makebox(0,0)[lb]{\smash{{\SetFigFont{9}{10.8}{\rmdefault}{\mddefault}{\updefault}{\color[rgb]{0,.82,.82}$p^\circ$}%
}}}}
\put(615,-235){\makebox(0,0)[lb]{\smash{{\SetFigFont{9}{10.8}{\rmdefault}{\mddefault}{\updefault}{\color[rgb]{0,.82,.82}$p^\circ$}%
}}}}
\end{picture}%
\end{center}
\caption{All distances and angles in the hyperbolic plane are related to the cross-ratio.}
\label{fig:PointsLines}
\end{figure}


\subsection{Types of hyperbolic isometries and M\"obius transformations}
\begin{definition}
A \emph{M\"obius transformation} of a circle $C$ is a restriction of a projective transformation that leaves $C$ invariant.
\end{definition}
In other words, a M\"obius transformation is an extension of a hyperbolic isometry by continuity to the boundary of the Cayley-Klein model.

The action of the M\"obius group on a circle is equivalent to the action of the linear fractional group on a projective line, if the circle is identified with the line by a stereographic projection. This implies the following.
\begin{prp}
M\"obius transformations of a circle act freely and transitively on triples of points.
\end{prp}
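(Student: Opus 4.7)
The plan is to transfer the problem to $\RP^1$ via the stereographic projection mentioned just before the statement, and then prove that linear fractional transformations act sharply $3$-transitively on $\RP^1$ using cross-ratios.

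First I would fix a stereographic projection of $C$ onto a projective line $\ell$. This identification is itself the restriction of a projective map, so it conjugates the M\"obius group of $C$ with the group of linear fractional transformations $t\mapsto\frac{at+b}{ct+d}$ of $\ell\cong\RP^1$. The problem is thereby reduced to showing that the linear fractional group acts freely and transitively on ordered triples of distinct points of $\RP^1$.

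For transitivity, given a triple $(p_1,p_2,p_3)$ of distinct points, I would exhibit the explicit map
\[
\phi(t) := \CR(p_1, p_2; p_3, t) = \frac{(p_1-p_3)(p_2-t)}{(b-c)\cdot(p_1-t)}\Big|_{b=p_2,\,c=p_3},
\]
which is a linear fractional function of $t$ sending $(p_1,p_2,p_3)$ to the fixed standard triple $(\infty, 0, 1)$. Composing the inverse of the analogous map for a second triple $(q_1,q_2,q_3)$ yields a linear fractional transformation carrying $(p_i)$ to $(q_i)$. For freeness, suppose a M\"obius transformation $\phi$ fixes three distinct points $p_1,p_2,p_3$. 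Being a restriction of a projective transformation, $\phi$ preserves cross-ratios of concurrent lines, and hence, by Proposition \ref{prp:CRLinesPoints} and the definition of cross-ratio on $C$, of quadruples of points on $C$ as well. Thus for every $t\in C$,
\[
\CR(p_1,p_2;p_3,t) = \CR(\phi(p_1),\phi(p_2);\phi(p_3),\phi(t)) = \CR(p_1,p_2;p_3,\phi(t)),
\]
and since $t\mapsto\CR(p_1,p_2;p_3,t)$ is a bijection of $C$ (by the transitivity argument just given), we conclude $\phi(t)=t$.

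There is no genuine obstacle; the only care required is the bookkeeping around the ``point at infinity'' of the affine chart on $\ell$, i.e.\ checking that the cross-ratio formulas used above remain meaningful when one of the arguments coincides with the center of the stereographic projection. This is handled by the usual conventions $\tfrac{1}{0}=\infty$ and $\tfrac{\infty}{\infty}=1$ that extend the cross-ratio to a well-defined function on all quadruples of distinct points of $\RP^1$.
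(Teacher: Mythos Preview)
Your approach is correct and matches the paper's: the paper simply states that stereographic projection identifies the M\"obius group of $C$ with the linear fractional group on $\RP^1$ and declares the proposition to follow, treating the sharp $3$-transitivity of $\mathrm{PGL}_2$ as known rather than proving it. Your proposal takes exactly this route and additionally supplies the standard cross-ratio argument for that $3$-transitivity, so it is a fleshed-out version of the paper's one-line justification.
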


In particular, a non-identical M\"obius transformation cannot have more than two fixed points. An orientation-preserving M\"obius transformation $f$, together with the corresponding orientation-preserving hyperbolic isometry $\tilde{f}$, belongs to one of the following three types:
\begin{itemize}
\item \emph{elliptic}, if $f$ has no fixed points;
\item \emph{parabolic}, if $f$ has exactly one fixed point;
\item \emph{hyperbolic}, if $f$ has exactly two fixed points.
\end{itemize}

Every hyperbolic isometry of elliptic type has one fixed point inside $C$, and is therefore a rotation around this point. Rotation by $180^\circ$ is the point reflection.

\begin{figure}[ht]
\begin{center}
\begin{picture}(0,0)%
\includegraphics{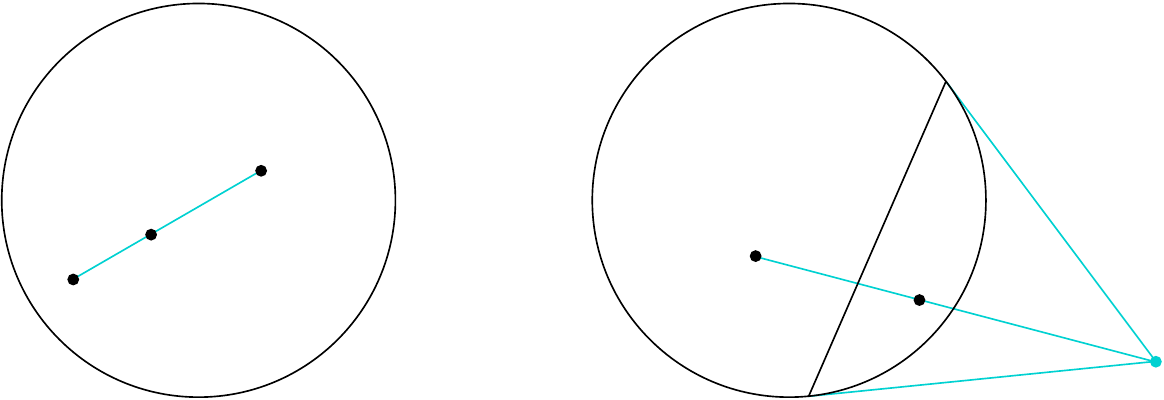}%
\end{picture}%
\setlength{\unitlength}{4144sp}%
\begingroup\makeatletter\ifx\SetFigFont\undefined%
\gdef\SetFigFont#1#2#3#4#5{%
  \reset@font\fontsize{#1}{#2pt}%
  \fontfamily{#3}\fontseries{#4}\fontshape{#5}%
  \selectfont}%
\fi\endgroup%
\begin{picture}(5317,1816)(-7,-970)
\put(692,-352){\makebox(0,0)[lb]{\smash{{\SetFigFont{9}{10.8}{\rmdefault}{\mddefault}{\updefault}{\color[rgb]{0,0,0}$p$}%
}}}}
\put(340,-566){\makebox(0,0)[lb]{\smash{{\SetFigFont{9}{10.8}{\rmdefault}{\mddefault}{\updefault}{\color[rgb]{0,0,0}$x$}%
}}}}
\put(1205,-58){\makebox(0,0)[lb]{\smash{{\SetFigFont{9}{10.8}{\rmdefault}{\mddefault}{\updefault}{\color[rgb]{0,0,0}$R_p(x)$}%
}}}}
\put(3973,-34){\makebox(0,0)[lb]{\smash{{\SetFigFont{9}{10.8}{\rmdefault}{\mddefault}{\updefault}{\color[rgb]{0,0,0}$\ell$}%
}}}}
\put(5295,-763){\makebox(0,0)[lb]{\smash{{\SetFigFont{9}{10.8}{\rmdefault}{\mddefault}{\updefault}{\color[rgb]{0,.82,.82}$\ell^\circ$}%
}}}}
\put(3300,-493){\makebox(0,0)[lb]{\smash{{\SetFigFont{9}{10.8}{\rmdefault}{\mddefault}{\updefault}{\color[rgb]{0,0,0}$S_\ell(x)$}%
}}}}
\put(4219,-484){\makebox(0,0)[lb]{\smash{{\SetFigFont{9}{10.8}{\rmdefault}{\mddefault}{\updefault}{\color[rgb]{0,0,0}$x$}%
}}}}
\end{picture}%
\end{center}
\caption{Point and line reflections in the Cayley-Klein model.}
\label{fig:Reflections}
\end{figure}

The most basic orientation-reversing hyperbolic isometry is the line reflection. The lines joining a point with its mirror image with respect to $\ell$ all go through the pole of~$\ell$.

\subsection{The porism as composition of M\"obius involutions}
For a point $p$ not on a circle $C$, define a map
\begin{equation}
\label{eqn:Ip}
I_p \colon C \to C
\end{equation}
so that $I_p(x)$ is the other end of the chord from $x$ through $p$. Then Theorem \ref{thm:Porism} can be reformulated as follows.

\begin{theorem}
For a circle $C$ and a line $\ell$, let $p, q, r, s \in \ell \setminus C$. If there exists a point $x \in C \setminus \ell$ such that $I_s \circ I_r \circ I_q \circ I_p(x) = x$, then $I_s \circ I_r \circ I_q \circ I_p = \mathrm{id}$.
\end{theorem}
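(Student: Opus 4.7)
The plan is to show that $F := I_s \circ I_r \circ I_q \circ I_p$ lies in a one-parameter abelian subgroup of the M\"obius group of $C$, and that every non-identity element of this subgroup has all its fixed points contained in $\ell$; combined with the hypothesized fixed point $x \in C \setminus \ell$, this forces $F = \mathrm{id}$.

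First I would identify the intersection of $\ell$ with the (complexified) circle $C$ as an unordered pair $\{a, b\} \subset \mathbb{C}P^1$: two distinct real points when $\ell$ is secant, a double real point $a = b$ when $\ell$ is tangent, and a pair of complex conjugate points when $\ell$ is disjoint from $C$. The key observation is that for every $p \in \ell \setminus C$ the involution $I_p$, extended to $\mathbb{C}P^1$, interchanges $a$ and $b$ (and fixes $a$ in the tangent case): the line through $p$ and $a$ coincides with $\ell$, whose other intersection with $C$ is $b$, so $I_p(a) = b$ and symmetrically $I_p(b) = a$.

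Consequently both $I_q \circ I_p$ and $I_s \circ I_r$ fix $a$ and $b$, and hence lie in the pointwise stabilizer $G$ of $\{a, b\}$ in the M\"obius group of $\mathbb{C}P^1$. In an affine coordinate placing $a = 0$ and $b = \infty$, the group $G$ consists of the dilations $x \mapsto \mu x$; in the tangent case, placing $a = b = \infty$, it consists of the translations $x \mapsto x + c$. Either way $G$ is abelian, so $F = (I_s \circ I_r) \circ (I_q \circ I_p)$ also lies in $G$, and every non-identity element of $G$ has its fixed-point set on $\mathbb{C}P^1$ equal to $\{a, b\}$, which is a subset of $\ell$. Since the hypothesized fixed point $x$ lies in $C \setminus \ell$, it is a third fixed point of $F$, forcing $F = \mathrm{id}$.

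The main care required is the uniform treatment of the three geometric cases: in the disjoint case one must work on the complexified projective line $\mathbb{C}P^1$ in order to access the conjugate pair $\{a, \bar a\}$, while in the tangent case the stabilizer $G$ degenerates from the hyperbolic subgroup of dilations into the parabolic subgroup of translations. A small preliminary worth verifying is that each $I_p$ is genuinely a M\"obius transformation in the sense of the paper, which follows by exhibiting it as the restriction of the projective involution with center $p$ and axis the polar line $p^\circ$.
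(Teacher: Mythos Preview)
Your proposal is correct and shares the paper's core idea: each $I_p$ is a M\"obius involution, the pairwise compositions $I_q\circ I_p$ and $I_s\circ I_r$ both lie in the one-parameter stabilizer of the pair $\ell\cap C$, hence so does $F$, and a non-identity element of that stabilizer cannot fix a point of $C\setminus\ell$.

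Where you genuinely diverge is in the treatment of the tangent and disjoint cases. The paper argues these via hyperbolic geometry: when $\ell$ misses $C$, each $I_q\circ I_p$ is an elliptic isometry rotating about the hyperbolic point $\ell^\circ$, and when $\ell$ is tangent each is parabolic with the tangency point as unique fixed point; in either case the composition inherits the same center, so an additional fixed point on $C$ forces the identity. You instead complexify $C$ to $\mathbb{C}P^1$, locate the (possibly complex-conjugate or coinciding) intersection pair $\{a,b\}$, observe that each $I_p$ swaps $a$ and $b$ algebraically, and conclude that $F$ lies in the abelian stabilizer of $\{a,b\}$ (dilations or translations in a suitable chart). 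Your route is more uniform and purely projective, while the paper's route keeps the hyperbolic picture visible and ties into its surrounding discussion of isometry types. One small point worth making explicit in your write-up: in the disjoint case the reason $x\notin\{a,b\}$ is simply that $x$ is real while $a,b$ are non-real complex conjugates, not literally that $\{a,b\}\subset\ell$ as real points.
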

Interpretation of $I_p$ as a M\"obius transformation leads to an almost straightforward proof.
\begin{proof}
For $p$ inside $C$, the map $I_p$ extends by continuity the rotation by $\pi$ around $p$ in the Cayley-Klein model; for $p$ outside $C$, the map $I_p$ is an extension of a reflection in the polar line of $p$, see Figure \ref{fig:Reflections}. In any case, $I_p$ is a M\"obius transformation of $C$.

An intersection point of $\ell$ with $C$ is always a fixed point of the composition $I_s \circ I_r \circ I_q \circ I_p$.

If $|\ell \cap C| = 2$, then the existence of a third fixed point $x$ implies that $I_s \circ I_r \circ I_q \circ I_p$ is the identity.

Assume that $\ell$ is tangent to $C$ in a point $a$. Then $a$ is a unique fixed point of $I_q \circ I_p$, as well as of $I_s \circ I_r$. It follows that both maps are of parabolic type. The composition of two parabolic rotations with the same center is again a parabolic rotation or identity. Thus if $I_s \circ I_r \circ I_q \circ I_p$ has a fixed point $x$ different from $a$, then it must be the identity.

Finally, if $\ell$ is disjoint from $a$, then $I_q \circ I_p$ and $I_s \circ I_r$ are elliptics with the center $p^\circ \cap q^\circ = \ell^\circ = r^\circ \cap s^\circ$. Their composition is again an elliptic with the same center, or identity. If it has a fixed point $x \in C$, then it must be the identity, and the theorem is proved.
\end{proof}

This sheds new light on the projective butterfly theorem. Let us show that each of the conditions \eqref{eqn:CRCond}, \eqref{eqn:Tangent}, \eqref{eqn:ImagCR} is equivalent to
\begin{equation}
\label{eqn:EqualMaps}
I_q \circ I_p = I_r \circ I_s.
\end{equation}
In the first of the cases on Figure \ref{fig:CR}, the map $I_q \circ I_p$ is of hyperbolic type and corresponds to translation along the axis $pq$ by the distance $2\dist(p,q)$. In order for \eqref{eqn:EqualMaps} to hold, we thus must have $\dist(p,q) = \dist(r,s)$ (together with a correct order of points), and condition \eqref{eqn:CRCond} says exactly that.

In the second case, $I_q \circ I_p$ is of parabolic type, and the corresponding hyperbolic isometry maps a horocycle with center at $a$ to itself. The number $\frac{1}{a-p} - \frac{1}{a-q}$ is proportional to the translation length along the horocycle (the horocyclic coordinate can be measured by the stereographic projection from the point $a$), so that \eqref{eqn:Tangent} is again equivalent to \eqref{eqn:EqualMaps}.

Finally, in the third case the map $I_q \circ I_p$ is of elliptic type and corresponds to a rotation with center $\ell^\circ$. The angle $paq$ is half the rotation angle (to see this, use the Poincar\'e model); hence $\angle paq = \angle sar$ is equivalent to \eqref{eqn:EqualMaps}.

\subsection{Castillon's problem}
Castillon's problem is as follows.
\begin{quote}
Given a circle $C$ and $n$ points $p_1, \ldots, p_n$ not on $C$, inscribe in $C$ an $n$-gon whose sides (or their extensions) pass consecutively through $p_1, \ldots, p_n$.
\end{quote}

In other words, the problem consists of finding a fixed point of the map $I_{p_n} \circ \cdots \circ I_{p_1} \colon C \to C$. Berger \cite[Section 16.3.10.3]{BerII} describes how projective-geometric considerations lead to an actual construction procedure. The original 18\textsuperscript{th} century problem was posed for three points, that is for an inscribed triangle, \cite[Problem 29]{Doerrie}.

The fact that a M\"obius transformation with three fixed points is the identity implies the following.

\begin{theorem}
If the Castillon's problem has at least three solutions, then every starting point on the circle gives a solution.

If $p_1, \ldots, p_n$ are collinear, then for $n$ odd there is no solution, and for $n$ even the existence of one non-trivial solution implies that every starting point gives a solution.
\end{theorem}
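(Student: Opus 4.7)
The plan is to recast Castillon's problem as a fixed-point question for the M\"obius transformation
\[
F := I_{p_n} \circ \cdots \circ I_{p_1} \colon C \to C,
\]
a composition of the involutions from \eqref{eqn:Ip} and hence itself M\"obius. The workhorse, already used repeatedly in this section, is that a non-identity M\"obius transformation of $C$ has at most two fixed points; equivalently, three fixed points force the identity. For the first assertion this is immediate: three distinct solutions to Castillon's problem provide three fixed points of $F$, so $F = \mathrm{id}$, and then every $x \in C$ is a fixed point and hence yields a solution.

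For the second assertion I would first establish the geometric lemma that whenever all $p_i$ lie on a single line $\ell$, each $I_{p_i}$ preserves $\ell$ and permutes $\ell \cap C$ appropriately. The most transparent case is $\ell \cap C = \{a,b\}$: the chord from $a$ through any $p_i \in \ell$ coincides with $\ell$ itself, so $I_{p_i}(a) = b$ and $I_{p_i}(b) = a$, i.e., each $I_{p_i}$ swaps $a$ and $b$. The tangent case (each $I_{p_i}$ fixes the tangency point, since the chord from that point through $p_i$ degenerates to $\ell$) and the disjoint case (each $I_{p_i}$ is the reflection in a chord of $C$ through the common pole $\ell^\circ$) are handled analogously using the dictionary from the previous subsection.

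The parity dichotomy then follows at once. For $n$ even, $F$ fixes both $a$ and $b$, so any non-trivial solution $x_0 \in C \setminus \{a,b\}$ supplies a third fixed point, forcing $F = \mathrm{id}$ and giving the porism. For $n$ odd, the swap $F(a) = b \neq a$ rules out $F = \mathrm{id}$ and hence any porism; the degenerate polygon bouncing between $a$ and $b$ along $\ell$ also fails to close, in agreement with the ``no solution'' statement. The same obstruction appears in the tangent and disjoint configurations: the composition of an odd number of $I_{p_i}$'s has the wrong parity of orientation on $\ell$ to be the identity (a reflection through $\ell^\circ$ rather than a rotation around it in the disjoint case, and the analogue of a point-reflection in the tangent case). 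The main step that needs some care is precisely this swap lemma in the tangent and disjoint configurations; once it is in hand, the three-fixed-points principle disposes of the rest.
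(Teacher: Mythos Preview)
Your approach is exactly the paper's: the paper offers only the one-line remark that ``a M\"obius transformation with three fixed points is the identity implies the following'' and leaves the rest to the reader. Your treatment of the first assertion, and of the $n$ even secant case, carries this out correctly.

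There is, however, a genuine gap in the $n$ odd claim. You establish $F\neq\mathrm{id}$ (via $F(a)=b$ in the secant case, and via orientation parity otherwise), but the statement asserts the stronger conclusion that $F$ has \emph{no} fixed point on $C$. These are not equivalent: a M\"obius transformation swapping $a$ and $b$ may well fix two other points. Concretely, when $\ell\cap C=\varnothing$ every $p_i$ lies outside $C$, each $I_{p_i}$ is a reflection in a line through $\ell^\circ$, and an odd composition of such reflections is again a reflection in some line through $\ell^\circ$; its axis meets $C$ in two points, which are honest fixed points of $F$ and yield non-degenerate closed chains. In the tangent case one likewise gets $F(z)=c-z$ in a coordinate sending the tangency point to $\infty$, with fixed point $c/2$. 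So your sentence ``in agreement with the `no solution' statement'' is unjustified, and in fact the assertion ``for $n$ odd there is no solution'' fails without an extra hypothesis such as all $p_i$ lying inside $C$ (which forces the secant case with all $c_i<0$, and then $F(z)=K/z$ with $K<0$ really has no real fixed point). What your argument does prove, in every case, is that for $n$ odd there is no \emph{porism}.

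A smaller point: for $n$ even in the tangent and disjoint cases you cannot invoke ``three fixed points'' directly, since you have only one automatic fixed point (the tangency point) or none at all. ``Handled analogously'' is therefore too quick; one must argue, as the paper does for $n=4$ in the preceding subsection, that $F$ is parabolic with unique fixed point $a$ (tangent case) or elliptic with center $\ell^\circ$ (disjoint case), so that a single extra fixed point on $C$ already forces $F=\mathrm{id}$. You clearly have these ingredients in hand---you invoke exactly this dictionary in your $n$ odd discussion---but the $n$ even tangent/disjoint argument should be spelled out rather than deferred to analogy.
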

For $n=3$ the problem has at most two solutions.
For $n=4$ infinitely many solutions are possible only if $p_1,p_2,p_3,p_4$ are collinear, since the axes, respectively centers of the maps in \eqref{eqn:EqualMaps}, must coincide.
The case of six points coinciding in the pairs $p_1 = p_4$, $p_2 = p_5$, $p_3 = p_6$ is related to Pascal's theorem.

Hyperbolic geometry allows us to produce configurations of points, for which the chain of chords closes independently of the starting point.
\begin{theorem}
Let $p_1, \ldots, p_n$ be points inside a circle such that $p_1p_2\ldots p_n$ is a right-angled polygon when the interior of the circle is viewed as the Cayley-Klein model of the hyperbolic plane. Then $I_{p_n} \circ \cdots \circ I_{p_1} = \mathrm{id}$.
\end{theorem}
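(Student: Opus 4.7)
The plan is to reduce the statement to a clean fact about compositions of line reflections in the hyperbolic plane. Since each $p_i$ lies inside $C$, the map $I_{p_i}$ is (as discussed in the previous subsections) the extension by continuity to $\partial C$ of the hyperbolic half-turn $R_{p_i}$. So it suffices to establish the corresponding identity
\[
R_{p_n} \circ R_{p_{n-1}} \circ \cdots \circ R_{p_1} = \mathrm{id}
\]
as a hyperbolic isometry, after which restricting to $C$ gives the theorem.

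The key tool is the standard decomposition of a rotation as a product of two line reflections: if $\ell$ and $m$ are two hyperbolic lines meeting at $p$ at angle $\theta$, then $S_\ell \circ S_m$ is the rotation by $2\theta$ around $p$. Taking $\theta = \pi/2$, any point reflection $R_p$ equals $S_\ell \circ S_m$ for any pair of perpendicular lines through $p$. For each vertex $p_i$, let $m_i$ denote the hyperbolic line through $p_{i-1}$ and $p_i$, and let $\ell_i$ denote the hyperbolic line through $p_i$ and $p_{i+1}$ (indices read cyclically mod $n$). The right-angle hypothesis on the polygon says that $m_i \perp \ell_i$ at $p_i$, so
\[
R_{p_i} = S_{\ell_i} \circ S_{m_i}.
\]

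Now I substitute these decompositions into the full composition and exploit the telescoping. Since $\ell_i$ and $m_{i+1}$ are both the hyperbolic line through $p_i$ and $p_{i+1}$, they coincide, so $S_{m_{i+1}} \circ S_{\ell_i} = \mathrm{id}$. Thus in
\[
R_{p_n} \circ \cdots \circ R_{p_1} = S_{\ell_n} S_{m_n} S_{\ell_{n-1}} S_{m_{n-1}} \cdots S_{\ell_1} S_{m_1}
\]
every interior pair $S_{m_{i+1}} S_{\ell_i}$ cancels, leaving only $S_{\ell_n} \circ S_{m_1}$. But cyclically $\ell_n$ and $m_1$ are both the line through $p_n$ and $p_1$, hence equal, and the remaining product is the identity.

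The only subtle point, and the place where one must be careful, is the bookkeeping of the cyclic indices and the orientation conventions that make $\ell_i = m_{i+1}$ as unoriented lines (which is all that matters for reflections). Everything else is a direct application of the line-reflection calculus in $\mathbb{H}^2$ and the fact, recalled earlier in the paper, that $I_p$ for $p$ inside $C$ extends the hyperbolic half-turn $R_p$.
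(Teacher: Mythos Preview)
Your proof is correct and follows essentially the same approach as the paper: decompose each half-turn $R_{p_i}$ as the product of reflections in the two adjacent sides of the right-angled polygon and then telescope. The only difference is notation and the level of detail you provide about the index bookkeeping.
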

\begin{proof}
In a manner similar to euclidean geometry, the reflection in a point $p$ can be represented as composition of reflections in two orthogonal lines through~$p$. Replace each $I_{p_k}$ by the composition of reflections in the adjacent sides of the $n$-gon. Then we have
\[
I_{p_n} \circ \cdots \circ I_{p_1} = (S_1 \circ S_n) \circ (S_n \circ S_{n-1}) \circ \cdots \circ (S_2 \circ S_1) = S_1 \circ S_1 = \mathrm{id}. \qedhere
\]
\end{proof}
See Fig. \ref{fig:Castillon5} for the construction of a right-angled pentagon and two examples of closed trajectories.

\begin{figure}[ht]
\begin{center}
\includegraphics[width=.9\textwidth]{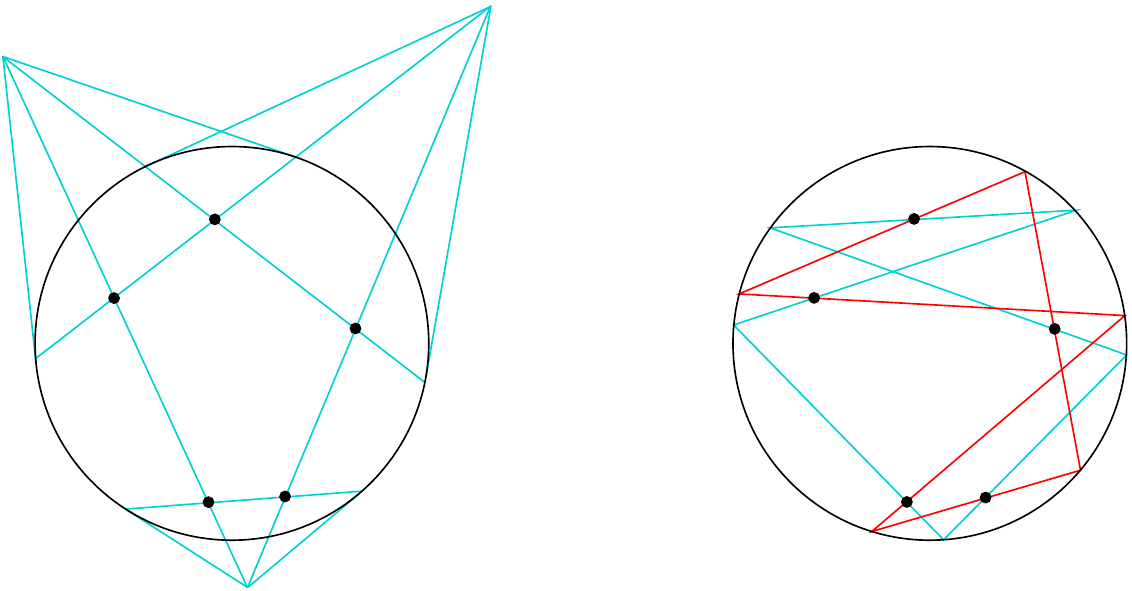}
\end{center}
\caption{Castillon's problem for vertices of a right-angled hyperbolic pentagon has infinitely many solutions.}
\label{fig:Castillon5}
\end{figure}

Castillon's problem can be posed for the sphere. The map $I_p \colon \Sph^2 \to \Sph^2$ is a M\"obius transformation. Since an orientation-preserving M\"obius transformation of $\Sph^2$ is determined by the images of three points, we have the following.

\begin{theorem}
If the Castillon's problem for the sphere and an even number of points has at least three solutions, then every starting point on the sphere gives a solution.
\end{theorem}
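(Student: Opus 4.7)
The plan is to reduce to the fact, already invoked just before the statement, that an orientation-preserving M\"obius transformation of $\Sph^2$ is determined by its values on three points. Given this, I only need to show that the composition $F := I_{p_n} \circ \cdots \circ I_{p_1}$ is orientation-preserving; three fixed points (i.e.\ three starting points giving solutions) will then force $F = \mathrm{id}$, so that every $x \in \Sph^2$ is a solution.

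Since $n$ is even, it suffices to check that each individual $I_p$ is orientation-\emph{reversing}. I would argue this by a fixed-point count. If $p$ lies outside $\Sph^2$, then $I_p$ fixes pointwise the entire circle along which the cone of tangent lines from $p$ touches the sphere; if $p$ lies inside, then $I_p$ has no fixed points at all, because every line through an interior $p$ meets $\Sph^2$ in two distinct points. On the other hand, any nontrivial orientation-preserving M\"obius involution of $\Sph^2$, being conjugate in $\mathrm{PSL}_2(\mathbb C)$ to $z \mapsto -z$ on $\mathbb{C}P^1$, has \emph{exactly two} fixed points. Hence $I_p$ cannot be orientation-preserving in either case.

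Once orientation-reversal of each $I_p$ is in hand, the conclusion is immediate: $F$ is a composition of an even number of orientation-reversing M\"obius transformations, hence an orientation-preserving M\"obius transformation of $\Sph^2$, and the hypothesis that $F$ has at least three fixed points forces $F = \mathrm{id}$ by the cited rigidity.

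The main (and essentially only) obstacle is verifying that $I_p$ reverses orientation; alternatively one could conjugate $I_p$ into a standard form (for $p$ the center of $\Sph^2$, $I_p$ is the antipodal map, which reverses orientation of the $2$-sphere, and an arbitrary $p$ can be normalized to this situation by a suitable M\"obius transformation) but the fixed-point count is the most direct route and matches the flavor of the arguments used earlier in the paper.
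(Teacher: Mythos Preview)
Your argument is correct and follows exactly the route the paper intends: the paper's entire ``proof'' is the sentence immediately preceding the theorem, namely that $I_p$ is a M\"obius transformation of $\Sph^2$ and that an orientation-preserving M\"obius transformation is determined by three points. You have simply made explicit the one step the paper leaves to the reader --- that each $I_p$ reverses orientation, so that an even composition is orientation-preserving --- and your fixed-point count (a circle of fixed points for exterior $p$, none for interior $p$, versus exactly two for any nontrivial holomorphic involution) is a clean way to do this.
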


Figure \ref{fig:Dodeca} shows three configuration of points, for which the chain of chords always closes. The points are the vertices of a right-angled hyperbolic dodecahedron in the Cayley-Klein model.

\begin{figure}[ht]
\begin{center}
\begin{picture}(0,0)%
\includegraphics{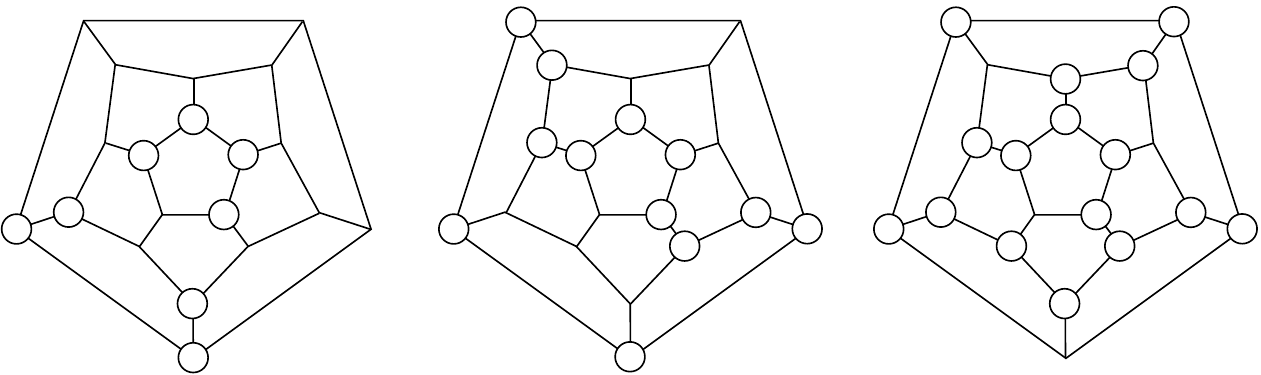}%
\end{picture}%
\setlength{\unitlength}{4144sp}%
\begingroup\makeatletter\ifx\SetFigFont\undefined%
\gdef\SetFigFont#1#2#3#4#5{%
  \reset@font\fontsize{#1}{#2pt}%
  \fontfamily{#3}\fontseries{#4}\fontshape{#5}%
  \selectfont}%
\fi\endgroup%
\begin{picture}(5756,1686)(-817,-541)
\put(3799,432){\makebox(0,0)[lb]{\smash{{\SetFigFont{6}{7.2}{\rmdefault}{\mddefault}{\updefault}{\color[rgb]{0,0,0}$1$}%
}}}}
\put(3424,168){\makebox(0,0)[lb]{\smash{{\SetFigFont{6}{7.2}{\rmdefault}{\mddefault}{\updefault}{\color[rgb]{0,0,0}$10$}%
}}}}
\put(3187, 89){\makebox(0,0)[lb]{\smash{{\SetFigFont{6}{7.2}{\rmdefault}{\mddefault}{\updefault}{\color[rgb]{0,0,0}$11$}%
}}}}
\put(3747, 14){\makebox(0,0)[lb]{\smash{{\SetFigFont{6}{7.2}{\rmdefault}{\mddefault}{\updefault}{\color[rgb]{0,0,0}$16$}%
}}}}
\put(4029,592){\makebox(0,0)[lb]{\smash{{\SetFigFont{6}{7.2}{\rmdefault}{\mddefault}{\updefault}{\color[rgb]{0,0,0}$2$}%
}}}}
\put(4256,428){\makebox(0,0)[lb]{\smash{{\SetFigFont{6}{7.2}{\rmdefault}{\mddefault}{\updefault}{\color[rgb]{0,0,0}$3$}%
}}}}
\put(4171,156){\makebox(0,0)[lb]{\smash{{\SetFigFont{6}{7.2}{\rmdefault}{\mddefault}{\updefault}{\color[rgb]{0,0,0}$4$}%
}}}}
\put(4278, 13){\makebox(0,0)[lb]{\smash{{\SetFigFont{6}{7.2}{\rmdefault}{\mddefault}{\updefault}{\color[rgb]{0,0,0}$5$}%
}}}}
\put(3996,-254){\makebox(0,0)[lb]{\smash{{\SetFigFont{6}{7.2}{\rmdefault}{\mddefault}{\updefault}{\color[rgb]{0,0,0}$15$}%
}}}}
\put(4384,841){\makebox(0,0)[lb]{\smash{{\SetFigFont{6}{7.2}{\rmdefault}{\mddefault}{\updefault}{\color[rgb]{0,0,0}$7$}%
}}}}
\put(4031,775){\makebox(0,0)[lb]{\smash{{\SetFigFont{6}{7.2}{\rmdefault}{\mddefault}{\updefault}{\color[rgb]{0,0,0}$8$}%
}}}}
\put(3621,488){\makebox(0,0)[lb]{\smash{{\SetFigFont{6}{7.2}{\rmdefault}{\mddefault}{\updefault}{\color[rgb]{0,0,0}$9$}%
}}}}
\put(3500,1030){\makebox(0,0)[lb]{\smash{{\SetFigFont{6}{7.2}{\rmdefault}{\mddefault}{\updefault}{\color[rgb]{0,0,0}$12$}%
}}}}
\put(4606,169){\makebox(0,0)[lb]{\smash{{\SetFigFont{6}{7.2}{\rmdefault}{\mddefault}{\updefault}{\color[rgb]{0,0,0}$6$}%
}}}}
\put(4493,1035){\makebox(0,0)[lb]{\smash{{\SetFigFont{6}{7.2}{\rmdefault}{\mddefault}{\updefault}{\color[rgb]{0,0,0}$13$}%
}}}}
\put(4804, 91){\makebox(0,0)[lb]{\smash{{\SetFigFont{6}{7.2}{\rmdefault}{\mddefault}{\updefault}{\color[rgb]{0,0,0}$14$}%
}}}}
\put(1810,432){\makebox(0,0)[lb]{\smash{{\SetFigFont{6}{7.2}{\rmdefault}{\mddefault}{\updefault}{\color[rgb]{0,0,0}$1$}%
}}}}
\put(2040,592){\makebox(0,0)[lb]{\smash{{\SetFigFont{6}{7.2}{\rmdefault}{\mddefault}{\updefault}{\color[rgb]{0,0,0}$2$}%
}}}}
\put(2267,428){\makebox(0,0)[lb]{\smash{{\SetFigFont{6}{7.2}{\rmdefault}{\mddefault}{\updefault}{\color[rgb]{0,0,0}$3$}%
}}}}
\put(2182,156){\makebox(0,0)[lb]{\smash{{\SetFigFont{6}{7.2}{\rmdefault}{\mddefault}{\updefault}{\color[rgb]{0,0,0}$4$}%
}}}}
\put(2289, 13){\makebox(0,0)[lb]{\smash{{\SetFigFont{6}{7.2}{\rmdefault}{\mddefault}{\updefault}{\color[rgb]{0,0,0}$5$}%
}}}}
\put(-189,432){\makebox(0,0)[lb]{\smash{{\SetFigFont{6}{7.2}{\rmdefault}{\mddefault}{\updefault}{\color[rgb]{0,0,0}$1$}%
}}}}
\put(268,428){\makebox(0,0)[lb]{\smash{{\SetFigFont{6}{7.2}{\rmdefault}{\mddefault}{\updefault}{\color[rgb]{0,0,0}$3$}%
}}}}
\put(183,156){\makebox(0,0)[lb]{\smash{{\SetFigFont{6}{7.2}{\rmdefault}{\mddefault}{\updefault}{\color[rgb]{0,0,0}$4$}%
}}}}
\put(2034,-491){\makebox(0,0)[lb]{\smash{{\SetFigFont{6}{7.2}{\rmdefault}{\mddefault}{\updefault}{\color[rgb]{0,0,0}$8$}%
}}}}
\put(2617,169){\makebox(0,0)[lb]{\smash{{\SetFigFont{6}{7.2}{\rmdefault}{\mddefault}{\updefault}{\color[rgb]{0,0,0}$6$}%
}}}}
\put(2848, 96){\makebox(0,0)[lb]{\smash{{\SetFigFont{6}{7.2}{\rmdefault}{\mddefault}{\updefault}{\color[rgb]{0,0,0}$7$}%
}}}}
\put(1231, 93){\makebox(0,0)[lb]{\smash{{\SetFigFont{6}{7.2}{\rmdefault}{\mddefault}{\updefault}{\color[rgb]{0,0,0}$9$}%
}}}}
\put(1509,1034){\makebox(0,0)[lb]{\smash{{\SetFigFont{6}{7.2}{\rmdefault}{\mddefault}{\updefault}{\color[rgb]{0,0,0}$10$}%
}}}}
\put(1654,835){\makebox(0,0)[lb]{\smash{{\SetFigFont{6}{7.2}{\rmdefault}{\mddefault}{\updefault}{\color[rgb]{0,0,0}$11$}%
}}}}
\put(1606,483){\makebox(0,0)[lb]{\smash{{\SetFigFont{6}{7.2}{\rmdefault}{\mddefault}{\updefault}{\color[rgb]{0,0,0}$12$}%
}}}}
\put(-770, 93){\makebox(0,0)[lb]{\smash{{\SetFigFont{6}{7.2}{\rmdefault}{\mddefault}{\updefault}{\color[rgb]{0,0,0}$7$}%
}}}}
\put( 37,-246){\makebox(0,0)[lb]{\smash{{\SetFigFont{6}{7.2}{\rmdefault}{\mddefault}{\updefault}{\color[rgb]{0,0,0}$5$}%
}}}}
\put( 45,-491){\makebox(0,0)[lb]{\smash{{\SetFigFont{6}{7.2}{\rmdefault}{\mddefault}{\updefault}{\color[rgb]{0,0,0}$6$}%
}}}}
\put(-534,170){\makebox(0,0)[lb]{\smash{{\SetFigFont{6}{7.2}{\rmdefault}{\mddefault}{\updefault}{\color[rgb]{0,0,0}$8$}%
}}}}
\put(-295,242){\makebox(0,0)[lb]{\smash{{\SetFigFont{6}{7.2}{\rmdefault}{\mddefault}{\updefault}{\color[rgb]{0,0,0}$a$}%
}}}}
\put( 34, -5){\makebox(0,0)[lb]{\smash{{\SetFigFont{6}{7.2}{\rmdefault}{\mddefault}{\updefault}{\color[rgb]{0,0,0}$c$}%
}}}}
\put( 41,592){\makebox(0,0)[lb]{\smash{{\SetFigFont{6}{7.2}{\rmdefault}{\mddefault}{\updefault}{\color[rgb]{0,0,0}$2$}%
}}}}
\put(255,662){\makebox(0,0)[lb]{\smash{{\SetFigFont{6}{7.2}{\rmdefault}{\mddefault}{\updefault}{\color[rgb]{0,0,0}$b$}%
}}}}
\put(-516,-318){\makebox(0,0)[lb]{\smash{{\SetFigFont{6}{7.2}{\rmdefault}{\mddefault}{\updefault}{\color[rgb]{0,0,0}$d$}%
}}}}
\end{picture}%
\end{center}
\caption{Configurations of points inside $\Sph^2$, for which the Castillon's problem has infinitely many solutions.}
\label{fig:Dodeca}
\end{figure}

To see why these configurations work, represent the reflection in a vertex as a composition of reflections in three adjacent sides of the dodecahedron. By cancelling the common reflection, we get for example $I_2 \circ I_1 = S_b \circ S_a$ in the leftmost figure, so that
\[
I_8 \circ \cdots \circ I_1 = (S_a \circ S_d) \circ (S_d \circ S_c) \circ (S_c \circ S_b) \circ (S_b \circ S_a) = S_a \circ S_a = \mathrm{id}
\]
where $S_x$ denotes the reflection in the face $x$.

\def\cprime{$'$}

\end{document}